\newtheorem{Problem}{Problem}[section]
\newtheorem{theorem}{Theorem}[section]
\newtheorem{lemma}{Lemma}[section]
\newtheorem{example}{Example}[section]
\newtheorem{corollary}{Corollary}[section]
\newproof{proof}{Proof}
\newproof{pot}{Proof of Theorem \ref{thm2}}
\newcommand{\bb}{\begin{bmatrix}}
\newcommand{\eb}{\end{bmatrix}}
\begin{document}

\date{}
\begin{frontmatter}
\title{
A fast numerical algorithm for constructing nonnegative
matrices with prescribed real eigenvalues
\tnoteref{t1}}

\author{Matthew M. Lin \corref{cor1}\fnref{fn2}}
\ead{mhlin@ccu.edu.tw}
\address{Department of Mathematics, National Chung Cheng University, Chia-Yi 621, Taiwan.}
\cortext[cor1]{Corresponding author}

\fntext[fn2]{The author was supported by the National Science Council of Taiwan under grant
NSC101-2115-M-194-007-MY3.}

\date{ }

\begin{abstract}
The study of solving the inverse eigenvalue problem for nonnegative matrices has been around for decades. It is clear that an inverse eigenvalue problem is trivial if the desirable matrix is not restricted to a certain structure. 
Provided with the real spectrum, this paper presents a numerical procedure, based on the induction principle, to solve two kinds of inverse eigenvalue problems, one for nonnegative matrices and another for symmetric nonnegative matrices. As an immediate application, our approach can offer not only the sufficient condition for solving inverse eigenvalue problems for nonnegative or symmetric nonnegative matrices, but also a quick numerical way to solve inverse eigenvalue problem for stochastic matrices. Numerical examples are presented for 
problems of relatively larger size.

\end{abstract}

\begin{keyword}
Inverse eigenvalue problem, nonnegative matrices, Perron-Frobenius theorem, stochastic matrices
\end{keyword}

\end{frontmatter}

\section{Introduction}

A real $n\times n$ matrix is said to be nonnegative if each of its entries is nonnegative. Considerable research efforts have been directed towards the properties of the eigeninformation of  nonnegative matrices, especially the following nonnegative inverse eigenvalue problem (NIEP).
%
%
%

\begin{Problem}[NIEP]
Let $\sigma = \{\lambda_1,\ldots,\lambda_n\}$ be a set of $n$ complex numbers. Find a nonnegative $n\times n$ matrix with eigenvalues $\sigma$ (if such a matrix exists).
\end{Problem}

It is easy to see that the solution of the NIEP may not be unique, once it  exists, since  there are $n$ given numbers with respect to $n^2$ unknown variables, i.e., an $n\times n$ matrix.  
More generally, let $\sigma 
= \{\lambda_1,\ldots,\lambda_n\}$ be a set of 
eigenvalues of an $n\times n$ matrix $A$ and let the $k$th moment $s_k$ of $\sigma$ be defined by 
\begin{equation}\label{moments}
s_k = \sum_{i=1}^n \lambda_i^k = \rm{trace}(A^k),\quad k=1,2,\ldots,
\end{equation} 
It follows that if $\sigma$ is a set of eigenvalues of a nonnegative matrix $A$, then the moments of the nonnegative matrix are always nonnegative,i.e.,
\begin{equation}\label{Loewy1}
s_k \geq 0, \quad k = 1,2,\ldots.
\end{equation} 
Based on the notion given in~\eqref{moments}, the following necessary condition provides the most broad-based necessary condition in the solvability of a nonnegative inverse eigenvalue problem and can be shown by simply applying the H\"{o}lder inequality~\cite{Loewy1978}.

\begin{theorem}
Suppose $\sigma = \{\lambda_1,\ldots,\lambda_n\}$ be a set of eigenvalues of an $n\times n$ nonnegative matrix. Then the inequalities
\begin{equation}\label{Loewy}
s_k^m\leq n^{m-1} s_{km}
\end{equation}
are satisfied for all $k, m = 1,2,\ldots.$
 
\end{theorem}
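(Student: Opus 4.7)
The plan is to exploit the entrywise nonnegativity of $A$ through its powers. Since $A\geq 0$ entrywise, every power $A^k$ is also nonnegative, so if we denote the diagonal entries of $A^k$ by $d_i := (A^k)_{ii}\geq 0$, then
\begin{equation*}
s_k = \mathrm{trace}(A^k) = \sum_{i=1}^n d_i.
\end{equation*}
Thus the left-hand side $s_k^m$ becomes a quantity to which the classical H\"older inequality can be applied to the vector $(d_1,\ldots,d_n)$ against the constant vector $(1,\ldots,1)$.

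Second, I would bound $s_{km}$ from below by $\sum_{i=1}^n d_i^m$. Writing $s_{km} = \mathrm{trace}((A^k)^m)$ and expanding the diagonal entries combinatorially,
\begin{equation*}
((A^k)^m)_{ii} = \sum_{j_1,\ldots,j_{m-1}} (A^k)_{i j_1}(A^k)_{j_1 j_2}\cdots (A^k)_{j_{m-1} i},
\end{equation*}
every summand is nonnegative (again because $A^k\geq 0$), and the single term corresponding to $j_1=\cdots=j_{m-1}=i$ equals $d_i^m$. Summing over $i$ yields $s_{km}\geq\sum_{i=1}^n d_i^m$.

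Finally, H\"older's inequality with conjugate exponents $m$ and $m/(m-1)$ gives
\begin{equation*}
\sum_{i=1}^n d_i \;=\;\sum_{i=1}^n d_i\cdot 1\;\leq\;\Bigl(\sum_{i=1}^n d_i^m\Bigr)^{1/m} n^{(m-1)/m},
\end{equation*}
so raising to the $m$-th power produces $s_k^m \leq n^{m-1}\sum_i d_i^m \leq n^{m-1}s_{km}$, which is the desired inequality. The only nontrivial step is the middle one: the inequality $s_{km}\geq\sum_i d_i^m$ is precisely where entrywise nonnegativity of $A$ (not merely $s_k\geq 0$) is indispensable, because it both ensures $d_i\geq 0$ and allows the off-diagonal contributions to $((A^k)^m)_{ii}$ to be discarded without flipping the inequality. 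Everything else is a routine application of H\"older.
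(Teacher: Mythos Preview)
Your proof is correct and is precisely the standard H\"older-based argument the paper alludes to (the paper does not spell out a proof, merely citing Loewy and London and remarking that the inequality follows from H\"older). The three ingredients you identify---nonnegativity of $A^k$ giving $d_i\geq 0$, the bound $(A^{km})_{ii}\geq d_i^m$ obtained by discarding nonnegative off-diagonal paths, and the power-mean/H\"older inequality $(\sum_i d_i)^m\leq n^{m-1}\sum_i d_i^m$---are exactly those of the original Loewy--London argument.
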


It has been shown in~\cite{Loewy1978} that 
inequalities~\eqref{Loewy1} and~\eqref{Loewy} are the necessary and sufficient conditions for  
$\sigma = \{\lambda_1,\ldots,\lambda_n\}$
with $n\leq 3$
 to be a set of eigenvalues of some nonnegative matrix. However, for $n\geq 4$,~\eqref{Loewy1} and~\eqref{Loewy} are not sufficient, and the problem is still open. If $\sigma$ is further restricted to be real, i.e., 
the NIEP with real eigenvalues (RNIEP), then conditions~\eqref{Loewy1} and~\eqref{Loewy} are still necessary and sufficient for solving RNIEP with $n=4$~\cite{Loewy1978}. In fact, the RNIEP is still open for $n\geq 5$. Truly, there are various necessary or sufficient conditions for a list $\sigma$ to be realizable as the eigenvalues of a nonnegative matrix; however, in general, the necessary conditions are unusually too general and the sufficient conditions are too specific with nonconstructive proofs~\cite[Section 6]{Chu02}. 
%
%
%
One sufficient condition that is constructive for a list of $n$ real numbers to be the spectrum of a nonnegative matrix is given by Suleimanova~\cite{Sulei1949}.  
\begin{theorem}\label{Sulei}
Suppose $\sigma =\{\lambda_k\}_{k=1}^n \subset \mathbb{R}$, $\lambda_1+\lambda_2+\ldots+\lambda_n\geq 0$ and  $\lambda_{i} < 0$ for $i = 2,\ldots, n$. Then there exists a nonnegative $n\times n$ matrix with spectrum $\sigma$.
\end{theorem}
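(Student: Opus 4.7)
The plan is to exhibit an explicit nonnegative matrix with characteristic polynomial $p(x) = \prod_{i=1}^n(x - \lambda_i)$, namely the Frobenius companion matrix of $p$, and then show that all of its entries are nonnegative under the stated hypotheses. Setting $\mu_i := -\lambda_i > 0$ for $i \geq 2$, the hypothesis $\sum_{i=1}^n \lambda_i \geq 0$ becomes $\lambda_1 \geq \sum_{i=2}^n \mu_i$; in particular $\lambda_1 > 0$ (assuming $n \geq 2$), so we are in the classical picture of one dominant positive root and $n-1$ negative roots.

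First I would expand $p(x) = (x-\lambda_1)\prod_{i=2}^n(x+\mu_i)$. Writing the second factor as $\sum_{k=0}^{n-1} a_k x^{n-1-k}$, the coefficients $a_k = e_k(\mu_2,\ldots,\mu_n)$ are the elementary symmetric functions of the positive numbers $\mu_2,\ldots,\mu_n$, and hence $a_k \geq 0$ with $a_0 = 1$. Multiplying out,
\[
p(x) = x^n - c_1 x^{n-1} - c_2 x^{n-2} - \cdots - c_n,
\]
where $c_1 = \lambda_1 - a_1$, $c_k = \lambda_1 a_{k-1} - a_k$ for $2 \leq k \leq n-1$, and $c_n = \lambda_1 a_{n-1}$. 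The companion matrix of $p$---with $1$'s on the superdiagonal, $c_n, c_{n-1}, \ldots, c_1$ across the bottom row, and zeros elsewhere---realizes $p$ as its characteristic polynomial. Thus the whole problem reduces to checking that each $c_k$ is nonnegative.

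The values $c_1 = \sum_{i=1}^n \lambda_i \geq 0$ and $c_n = \lambda_1\,\mu_2 \cdots \mu_n \geq 0$ are immediate from the hypotheses. The main obstacle, and where the real content lies, is the range $2 \leq k \leq n-1$, for which one needs $\lambda_1 a_{k-1} \geq a_k$. Since $\lambda_1 \geq a_1$ (this is just $c_1 \geq 0$), it suffices to establish the purely symmetric-function inequality $a_1 a_{k-1} \geq a_k$ for elementary symmetric polynomials of positive reals. This follows by expanding the product $a_1 a_{k-1}$: every squarefree monomial appearing in $a_k$ is produced exactly $k$ times (once for each way of choosing which variable came from the $a_1$ factor), while all remaining summands have repeated indices and are themselves nonnegative. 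Hence $a_1 a_{k-1} \geq k\,a_k \geq a_k$, which completes the verification and yields the desired nonnegative realization.
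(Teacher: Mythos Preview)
Your proof is correct and follows a classical route: realize the spectrum by the companion matrix of $p(x)=\prod_i(x-\lambda_i)$ and check nonnegativity of its entries via the elementary symmetric function inequality $e_1 e_{k-1}\ge k\,e_k\ge e_k$ in positive variables. The paper, however, does not argue this way. It obtains Suleimanova's theorem as the special case $r=1$ of its Theorem~\ref{SuleiMin}, proved by a recursive $2\times 2$ ``gluing'' procedure: starting from a $2\times 2$ nonnegative matrix with eigenvalues $\{\lambda_1,\lambda_2\}$ (Lemma~\ref{LemNon}/\ref{LemNon2}), each successive $\lambda_i$ is absorbed by forming a new $2\times 2$ block with eigenvalues $\{\lambda_i,\,A(i-1,i-1)\}$ and merging via Theorem~\ref{nazari12}/Corollary~\ref{nazari13}. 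The running invariant is that the bottom-right entry of the current matrix always dominates $|\lambda_i|$, which is exactly what the trace condition $\sum_j\lambda_j\ge 0$ guarantees.

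What each approach buys: your companion-matrix argument is shorter and entirely self-contained, needing no auxiliary merging result, and the estimate $a_1 a_{k-1}\ge k\,a_k$ is pleasantly sharp. The paper's recursive construction, on the other hand, (i) simultaneously produces a \emph{symmetric} nonnegative realization, which the companion matrix cannot, and (ii) extends verbatim to the weaker hypothesis $\lambda_1+\sum_{\lambda_i<0}\lambda_i\ge 0$ with several nonnegative $\lambda_i$ allowed; in that regime the companion matrix can pick up negative entries (e.g.\ $\sigma=\{2,1,-1\}$ gives $c_3=-2$), so your method does not generalize directly.
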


Indeed, Suleimanova's result can also be limited to the case of symmetric matrices and a simple proof for the case of symmetric matrices is given in~\cite{Fiedler1974}[Theorem 2.4]. In this paper, a weaker condition than Suleimanova's result is provided for solving RNIEP. We then apply this weaker condition for constructing a nonnegative matrix associated with the given real eigenvalues. 
There are many sufficient conditions for solving RNIEP in the literature~\cite{Perfect55,Kellogg1971,Salzmann1972,Fiedler1974,Berman1994,Borobia1995,Soto2003b,Egleston2004,Soto2007a,Soto2011} and the references contained therein. 
Instead of comparing our condition with other known results, we present here a numerical approach, based on an improvement to Suleimanova's condition~\cite{Sulei1949}, to solve RNIEP of larger size. 

In addition to RNIEP, we also discuss a related problem, called the \emph{symmetric nonnegative inverse eigenvalue problem} (SNIEP), proposed by Fiedler~\cite{Fiedler1974}.
\begin{Problem}[SNIEP]
Let $\sigma = \{\lambda_1,\ldots,\lambda_n\}$ be a set of $n$ real numbers. 
Find a symmetric nonnegative $n\times n$ matrix with eigenvalues $\sigma$ (if such a matrix exists).
\end{Problem}
Again, this problem is still open for $n\geq 5$~\cite{Loewy1978,Meehan1998,Torre2007}.

So far as we know, applicable numerical methods for solving SNIEPs have thus far been proposed only twice~\cite{Chu1998,Orsi2006}. 
In~\cite{Chu1998}, the 
SNIEP is formulated as the following  constrained optimization problem
\begin{equation}
\min_{Q^\top Q = I, R=R^\top}\frac{1}{2}\|Q^\top \Lambda Q- R\circ R\|.
\end{equation}
Here, $\Lambda$ is a diagonal matrix with the desired spectrum and $\circ$ represents the Hadamard product. The idea  is to parameterize any symmetric
matrix with the desired spectrum equal to $\Lambda$
by $X=Q^\top\Lambda Q$ and to parameterize 
any symmetric nonnegative matrix $Y$ by $Y = R\circ R$ for some symmetric matrix $R$. Later, Orsi~\cite{Orsi2006} utilizes alternating projection ideas 
for the SNIEP. This projection consists of two particular 
sets. One is the set of all real symmetric matrices with the desired spectrum. The other is the set of symmetric nonnegative matrices.  It should be noted that above both methods are proposed to approximate a nonnegative matrix with the desired spectrum. 

Instead of obtaining an  approximate result, our work constructs a symmetric nonnegative matrix based on a sequence of  $2\times 2$ matrices as a building block. This approach is guaranteed to construct a nonnegative matrix of size $n$ after $n-1$ iterations. It should be noted that there are many other inverse eigenvalue problems involving matrices with a particular structure
and a particular desired spectrum. 
For more on other inverse problems, see the papers~\cite{Chu1991,Chu1998b,Chu1998,Chu02,Chu2005} and the book~\cite{Chu05}. In this paper, we describe a numerical procedure for solving RNIEP and SNIEP, which, while admittedly quite crude, suggests the  possibility of solving many structured inverse eigenvalue problems and is currently under investigation.
%
%

%
%
%

This paper is organized as follows.
We begin Section 2 with a discussion of the condition of two eigenvalues to be a spectrum of a $2\times 2$ nonnegative matrix and construct this $2\times 2$ matrix explicitly, given its eigenvalues. 
This $2\times 2$ construction then serves as a fundamental tool in the construction of an $n\times n$ 
nonnegative matrix. In Section 3, we briefly review 
Nazari and Sherafat's result~\cite{Nazari2012}
in combining two nonnegative matrices with the desired spectrum. We point out, in particular, how to apply this result to a general $n\times n$ matrix by the splitting of this given matrix. 
In Section 4, we discuss 
how the $2\times 2$ construction can be applied to 
the inverse eigenvalue problem for stochastic matrices. Concluding remarks are given in Section 5.

%
%
%

\section{The $2 \times 2$ building block}
In this section, we describe how a nonnegative matrix $A$ can be constructed. Specifically, we want to determine a $2\times 2$ nonnegative matrix $A$ with $\sigma(A) = \{\lambda_1, \lambda_2\}$. This $2\times 2$ construction will become a building block in our recursive algorithm.  
Note that for the existence of a nonnegative matrix 
\[
A =\left[\begin{array}{cc} a & b \\c & d\end{array}\right], 
\]
with eigenvalues $\{\lambda_1, \lambda_2\}$, 
it is true that 
\begin{subequations} \label{genrule}
\begin{eqnarray}
a + d &=& \lambda_1 + \lambda_2 \geq 0,\label{eq1}\\
ad-bc &=& \lambda_1 \lambda_2. \label{eq2}
\end{eqnarray}
\end{subequations}
Since $b$ and $c$ are nonnegative, it follows directly from~\eqref{eq1} and~\eqref{eq2} that 
\begin{eqnarray}
bc &=& a (\lambda_1+\lambda_2-a) -\lambda_1\lambda_2  \nonumber\\
&=& -(a - \frac{\lambda_1+\lambda_2}{2})^2 
+ \frac{(\lambda_1-\lambda_2)^2}{4} \geq 0\label{eq:bc}.
\end{eqnarray}
This implies that $\lambda_1 \geq a \geq \lambda_2$. 
If $\lambda_2 < 0$, then the entry $a$ is further limited to $\lambda_1+\lambda_2\geq a \geq 0$. Putting together the above results, the entries of nonnegative matrices with the set of eigenvalues $\{\lambda_1, \lambda_2\}$ can be completely characterized as follows.
\begin{lemma}\label{LemNon}
$\{\lambda_1, \lambda_2\}$ are eigenvalues of a $2\times 2$ nonnegative matrix
$A = \left[\begin{array}{cc}a & b \\c & d\end{array}\right ]$
if and only if~\eqref{genrule} and the following  conditions,
\begin{align}\label{cond}
\left.\begin{array}{rl}
\lambda_1 \geq a \geq \lambda_2,& 
\mbox{if }  \lambda_2 \geq 0, \\
\lambda_1+\lambda_2 \geq a \geq  0, &
\mbox{if }  \lambda_2 < 0,
\end{array}\right.
\end{align}
are satisfied. 
\end{lemma}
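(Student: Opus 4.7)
The plan is to prove the two implications separately, assuming without loss of generality that $\lambda_1 \ge \lambda_2$. Much of the necessity direction has in fact already been laid out in the paragraph preceding the statement, so the task reduces to packaging those computations cleanly and supplying a converse construction.

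For necessity, suppose $A$ is nonnegative with spectrum $\{\lambda_1,\lambda_2\}$. The trace and determinant identities yield \eqref{eq1} and \eqref{eq2} at once. Then $bc\ge 0$, which follows from $b,c\ge 0$, substituted into the quadratic identity \eqref{eq:bc}, forces $|a-(\lambda_1+\lambda_2)/2|\le(\lambda_1-\lambda_2)/2$, i.e., $\lambda_2\le a\le \lambda_1$. When $\lambda_2\ge 0$ this is precisely the first case of \eqref{cond}. When $\lambda_2<0$, the constraints $a\ge 0$ and $d=\lambda_1+\lambda_2-a\ge 0$ both strictly tighten this window, yielding the second case $0\le a\le \lambda_1+\lambda_2$.

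For sufficiency, given \eqref{genrule} and \eqref{cond}, set $d:=\lambda_1+\lambda_2-a$; in either case of \eqref{cond} we read off $a,d\ge 0$ directly. Define $bc:=ad-\lambda_1\lambda_2$ as forced by \eqref{eq2}; via \eqref{eq:bc} this equals $-(a-(\lambda_1+\lambda_2)/2)^2+(\lambda_1-\lambda_2)^2/4$, which is nonnegative on $[\lambda_2,\lambda_1]$, an interval containing the range of $a$ allowed by \eqref{cond} in both cases. Pick any nonnegative $b,c$ with this product (for instance $b=c=\sqrt{ad-\lambda_1\lambda_2}$); then $A$ is entrywise nonnegative with the prescribed trace and determinant, and therefore with spectrum $\{\lambda_1,\lambda_2\}$.

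The only subtle point is the case split: when $\lambda_2<0$, the diagonal-nonnegativity constraints dominate the algebraic constraint $bc\ge 0$, so the correct range of $a$ shrinks from $[\lambda_2,\lambda_1]$ to $[0,\lambda_1+\lambda_2]$, and one must be careful not to replace the former by the latter. Beyond this bookkeeping, the argument is a routine consequence of the quadratic identity already displayed in \eqref{eq:bc}.
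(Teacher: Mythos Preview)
Your argument is correct and follows essentially the same route as the paper: necessity is exactly the computation in~\eqref{eq:bc} together with the diagonal constraints, and sufficiency amounts to observing that $d=\lambda_1+\lambda_2-a$ inherits the same bounds so that $a,d\ge 0$, while $bc\ge 0$ allows a nonnegative choice of $b,c$. The paper's own proof is terser and leaves the explicit choice of $b,c$ implicit, whereas you spell it out; otherwise the two proofs coincide.
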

\begin{proof}
It follows from~\eqref{genrule} and~\eqref{eq:bc} that we need only prove that if~\eqref{genrule} and~\eqref{cond} hold, then $A$ is a nonnegative matrix with the desired spectrum $\{\lambda_1, \lambda_2\}$. Suppose~\eqref{cond} holds. Then~\eqref{eq1} implies that 
\begin{align*}
\left.\begin{array}{rl}\lambda_1 \geq d \geq \lambda_2,& 
\mbox{if }  \lambda_2 \geq 0, \\
\lambda_1+\lambda_2 \geq d \geq  0, &
\mbox{if }  \lambda_2 < 0, 
\end{array}\right.
\end{align*}
 and thus $A$ is an nonnegative matrix.

\end{proof}

From Lemma~\ref{LemNon}, it is straightforward to see that 
the matrix 
\begin{equation}\label{NonSym}
A = \left\{\begin{array}{cc}
\left[\begin{array}{cc}\lambda_2 & 0\\0 & \lambda_1\end{array}\right], & \mbox{if } \lambda_2\geq 0, \\ 
&\\
\left[\begin{array}{cc}0 & -\lambda_1\lambda_2 \\1 & \lambda_1+\lambda_2\end{array}\right], & \mbox{if } \lambda_2< 0.
\end{array}
\right.
\end{equation}
is a nonnegative matrix with eigenvalues $\{\lambda_1, \lambda_2\}$. Similarly, we can come up with different nonnegative matrices based on the conditions given in Lemma~\ref{LemNon}. These $2\times 2$ nonnegative matrices will play a decisive role in the solvability of the RNIEP and will be illustrated in the next section. 

Now we know how to define $2\times 2$ nonnegative matrices so that the constructed matrices possess a prescribed pair of eigenvalues. Next, an interesting question to ask is whether the specified eigenvalues can be applied to construct an $2\times 2$ symmetric nonnegative matrix. The answer can be provided by the following result. We omit the proof here since it is so similar to the discussion in Lemma~\ref{LemNon}.
\begin{lemma}\label{LemNon2}
$\{\lambda_1, \lambda_2\}$ are eigenvalues of a $2\times 2$ symmetric nonnegative matrix
$A = \left[\begin{array}{cc}a & b \\b & d\end{array}\right ]$
if and only if~\eqref{genrule} and the following  conditions,
\begin{align}\label{cond2}
\left.\begin{array}{rl}
\lambda_1 \geq a \geq \lambda_2,& 
\mbox{if }  \lambda_2 \geq 0, \\
\lambda_1+\lambda_2 \geq a \geq  0, &
\mbox{if }  \lambda_2 < 0,
\end{array}\right.
\end{align}
are satisfied. In particular, the entry $b$ is 
denoted by 
\[
b = \sqrt{-a^2+(\lambda_1+\lambda_2)a - \lambda_1\lambda_2}\geq 0.
\]
\end{lemma}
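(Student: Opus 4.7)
The proof will mirror the structure of Lemma~\ref{LemNon} almost verbatim, with one simplification: symmetry forces $c=b$, so the product $bc$ appearing in (\ref{eq:bc}) becomes $b^2$. Hence the determinant condition $ad-b^2=\lambda_1\lambda_2$, combined with the trace condition $a+d=\lambda_1+\lambda_2$, pins down $b^2$ uniquely as
\[
b^2 \;=\; a(\lambda_1+\lambda_2-a)-\lambda_1\lambda_2 \;=\; -\Bigl(a-\tfrac{\lambda_1+\lambda_2}{2}\Bigr)^{2}+\tfrac{(\lambda_1-\lambda_2)^{2}}{4},
\]
exactly as in (\ref{eq:bc}). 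Taking the nonnegative square root gives the claimed formula for $b$ at once.

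For necessity, I would start with a symmetric nonnegative $A$ realizing $\{\lambda_1,\lambda_2\}$, read off (\ref{genrule}), and then note that the above identity forces the discriminant $-a^2+(\lambda_1+\lambda_2)a-\lambda_1\lambda_2\geq 0$, which rearranges to $\lambda_2\leq a\leq \lambda_1$. If $\lambda_2\geq 0$ this is exactly the first branch of (\ref{cond2}). If $\lambda_2<0$, the additional nonnegativity of $a$ and of $d=\lambda_1+\lambda_2-a$ tightens the interval to $0\le a\le \lambda_1+\lambda_2$, which is the second branch.

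For sufficiency, assume (\ref{genrule}) and (\ref{cond2}) and define
\[
b \;=\; \sqrt{\,-a^{2}+(\lambda_1+\lambda_2)a-\lambda_1\lambda_2\,},\qquad d \;=\; \lambda_1+\lambda_2-a.
\]
I would verify in each branch of (\ref{cond2}) that (i) the radicand is nonnegative, so $b$ is a well-defined real number, and (ii) $d\ge 0$. In the branch $\lambda_2\ge 0$, the inequality $\lambda_2\le a\le \lambda_1$ makes the radicand nonnegative by the completed-square form, and $d=\lambda_1+\lambda_2-a\ge \lambda_2\ge 0$. In the branch $\lambda_2<0$, the hypothesis $0\le a\le \lambda_1+\lambda_2$ implies $\lambda_2<0\le a\le \lambda_1+\lambda_2\le \lambda_1$, again making the radicand nonnegative, and $d\ge 0$ directly from $a\le \lambda_1+\lambda_2$. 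Then $A=\bigl[\begin{smallmatrix}a & b\\ b & d\end{smallmatrix}\bigr]$ is symmetric nonnegative with trace $\lambda_1+\lambda_2$ and determinant $\lambda_1\lambda_2$, hence has spectrum $\{\lambda_1,\lambda_2\}$.

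There is no real obstacle here; the only point requiring care is the bookkeeping between the two cases of (\ref{cond2}), specifically that the symmetric-case range for $a$ is forced by the radicand condition $b^{2}\ge 0$ rather than by an independent nonnegativity of two separate entries $b$ and $c$, which is why the conditions coincide with those in Lemma~\ref{LemNon}. This is precisely the similarity the author alludes to when omitting the proof.
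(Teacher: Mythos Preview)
Your proposal is correct and is precisely the argument the paper has in mind: the authors explicitly omit the proof, saying it is ``so similar to the discussion in Lemma~\ref{LemNon},'' and your write-up simply carries out that parallel argument with $c=b$ and the resulting identification $b^{2}=-a^{2}+(\lambda_1+\lambda_2)a-\lambda_1\lambda_2$. There is nothing to add.
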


From Lemma~\ref{LemNon2}, it can be seen that the matrix 
\begin{equation}\label{Sym}
A = \left\{\begin{array}{cc}
\left[\begin{array}{cc}\lambda_2 & 0 \\0 & \lambda_1\end{array}\right], & \mbox{if } \lambda_2\geq 0, \\ &\\
\left[\begin{array}{cc}0 & \sqrt{-\lambda_1\lambda_2} \\\sqrt{-\lambda_1\lambda_2} & \lambda_1+\lambda_2\end{array}\right], & \mbox{if } \lambda_2< 0.
\end{array}
\right.
\end{equation}
is a symmetric nonnegative  matrix with eigenvalues $\{\lambda_1, \lambda_2\}$.  In summary, the above examples can  
serve as building blocks to construct general $n\times n$ matrices with prescribed eigenvalues.

\section{Conquering procedure}
To facilitate our subsequent discussion, let $\rho(A)$ be the spectral radius of the nonnegative matrix $A$. It is known that $\rho(A)$ is an eigenvalue of  $A$, called the Perron eigenvalue,  and that there is a right eigenvector with nonnegative entries corresponding to the Perron eigenvalue. In this section we want to derive a sufficient condition for the set of $n$ real numbers $\lambda_1,\ldots,\lambda_n$ 
to be a possible set of eigenvalues in an $n\times n$ nonnegative matrix and then to come up with a numerical approach for constructing this nonnegative matrix.
To begin with, 
%
we shall first present a useful result given in~\cite{Nazari2012}[Theorem 2.1] for combining the eigeninformation of two non-negative matrices. 
%

\begin{theorem}\label{nazari12}
Suppose $\{\lambda_k\}_{k=1}^n$ and $\{\beta_k\}_{k=1}^m$ are eigenvalues of an $n\times n$ nonnegative matrix $A$ and an $m\times m$ nonnegative matrix $B$, respectively, with $\lambda_1 \geq |\lambda_k|$ and $\beta_1 \geq |\beta_k|$ for all $k>1$. 
Let $\mathbf{v}$ be the unit eigenvector corresponding to the eigenvalue $\beta_1$.
If the matrix $A$ is of the form 
\begin{equation}
A = \left[\begin{array}{cc} A_1 & \mathbf{a} \\
\mathbf{b}^\top & \beta_1\end{array}\right], 
\end{equation}
where $A_1$ is an $(n-1)\times(n-1)$ matrix, and $\mathbf{a}, \mathbf{b}$ are two vectors in $\mathbb{R}^{n-1}$, 
then the set of the eigenvalues of the matrix 
\begin{equation}\label{eq:c1}
C = \left[\begin{array}{cc} A_1 & \mathbf{a} \mathbf{v}^\top \\
\mathbf{v} \mathbf{b}^\top & B\end{array}\right], 
\end{equation}
is  $ \{\lambda_k\}_{k=1}^n\bigcup\{\beta_k\}_{k=2}^m$.

\end{theorem}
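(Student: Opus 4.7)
The plan is to exhibit an explicit similarity transformation that block upper--triangularises $C$, with diagonal blocks equal to $A$ and to a matrix whose spectrum is exactly $\{\beta_k\}_{k=2}^m$. The key observation is that the block structure of $C$ in~\eqref{eq:c1} has been engineered so that ``collapsing'' the Perron direction of $B$ back into a single scalar recovers the original matrix $A$.

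First I would complete $\mathbf{v}$ to an orthonormal basis of $\mathbb{R}^m$ by choosing an $m\times(m-1)$ matrix $W$ whose columns span $\mathbf{v}^\perp$, and set $Q := [\mathbf{v},\, W]$. Since $Q$ is orthogonal, $P := \begin{bmatrix} I_{n-1} & 0 \\ 0 & Q \end{bmatrix}$ is also orthogonal, hence invertible. Using $\mathbf{v}^\top Q = (1, 0, \ldots, 0)$ and $Q^\top \mathbf{v} = (1, 0, \ldots, 0)^\top$, together with the Perron identity $B\mathbf{v} = \beta_1 \mathbf{v}$ (which forces $W^\top B \mathbf{v} = 0$ and $\mathbf{v}^\top B \mathbf{v} = \beta_1$), the direct computation of $P^\top C P$ should yield
\[
P^\top C P = \begin{bmatrix} A_1 & \mathbf{a} & 0 \\ \mathbf{b}^\top & \beta_1 & \mathbf{c}^\top \\ 0 & 0 & B_2 \end{bmatrix},
\]
where $\mathbf{c}^\top := \mathbf{v}^\top B W$ and $B_2 := W^\top B W$. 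The top--left $n\times n$ block is then literally $A$, and the whole matrix is block upper--triangular in the partition $(n,\, m-1)$.

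From here the conclusion is immediate: the spectrum of $C$ equals the union of the spectra of $A$ and $B_2$. The spectrum of $A$ is $\{\lambda_k\}_{k=1}^n$ by hypothesis, and to identify $\sigma(B_2)$ I would apply the same similarity argument to $B$ alone: $Q^\top B Q$ is itself block upper--triangular with diagonal blocks $\beta_1$ and $B_2$, so $\sigma(B_2) = \sigma(B) \setminus \{\beta_1\} = \{\beta_k\}_{k=2}^m$ as a multiset. Combining yields $\sigma(C) = \{\lambda_k\}_{k=1}^n \cup \{\beta_k\}_{k=2}^m$, as required.

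The main obstacle is essentially bookkeeping: choosing $Q$ so that the zero blocks appear in the right places and verifying that the rank--one coupling terms $\mathbf{a}\mathbf{v}^\top$ and $\mathbf{v}\mathbf{b}^\top$ collapse correctly. Under conjugation by $P$ each of these only touches the first coordinate of the transformed $B$-block, which leaves the remaining $m-1$ coordinates completely decoupled from the top part of the matrix and produces the sought block--triangular form. Once this decoupling is verified, no spectral multiplicity subtlety remains, since the eigenvalues of a block--triangular matrix are simply the union (with multiplicities) of those of its diagonal blocks.
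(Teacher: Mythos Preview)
Your argument is correct. Completing $\mathbf{v}$ to an orthogonal matrix $Q=[\mathbf{v},W]$ and conjugating by $P=\mathrm{diag}(I_{n-1},Q)$ does give
\[
P^\top C P=\begin{bmatrix} A_1 & \mathbf{a} & 0\\ \mathbf{b}^\top & \beta_1 & \mathbf{v}^\top BW\\ 0 & 0 & W^\top BW\end{bmatrix},
\]
because $\mathbf{v}^\top Q=(1,0,\ldots,0)$, $Q^\top\mathbf{v}=(1,0,\ldots,0)^\top$, and $W^\top B\mathbf{v}=\beta_1 W^\top\mathbf{v}=0$. The upper--left $n\times n$ block is exactly $A$, and applying the same conjugation to $B$ alone shows $\sigma(W^\top BW)=\{\beta_k\}_{k=2}^m$ as a multiset. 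Nothing is missing; the bookkeeping you flag is the whole proof.

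As for comparison with the paper: there is nothing to compare to. The paper does not prove Theorem~\ref{nazari12}; it quotes it from \cite{Nazari2012} (their Theorem~2.1) and also remarks that it follows from an earlier perturbation result of \v{S}migoc \cite[Theorem~11]{Smigoc2004}. Your orthogonal block--triangularisation is a clean, self--contained substitute for those citations. It is worth noting, though it lies outside the spectral claim you were asked to prove, that the nonnegativity of $C$ itself comes from Perron--Frobenius: $\mathbf{v}\ge 0$ together with $\mathbf{a},\mathbf{b}\ge 0$ (inherited from $A$) makes the off--diagonal blocks $\mathbf{a}\mathbf{v}^\top$ and $\mathbf{v}\mathbf{b}^\top$ entrywise nonnegative.
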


Note that Nazari and Sherafat's result can also be obtained from an earlier perturbation result in~\cite[Theorem 11]{Smigoc2004} and  is quoted below. 
\begin{theorem}
Suppose $\{\lambda_k\}_{k=1}^n$ and $\{\beta_k\}_{k=1}^m$ are eigenvalues of an $n\times n$ nonnegative matrix $A$ and an $m\times m$ nonnegative matrix $B$, respectively, with $\lambda_1 \geq |\lambda_k|$ and $\beta_1 \geq |\beta_k|$ for all $k>1$. If $A$ has a diagonal entry $c$, then 
the list 
\begin{equation*}
(\lambda_1+\max\{\beta_1-c\},\lambda_2,\ldots,\lambda_n,\beta_2,\ldots,\beta_m)
\end{equation*}
is a set of eigenvalues of an $(n+m-1)\times (n+m-1)$ nonnegative matrix.
\end{theorem}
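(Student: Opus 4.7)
The plan is to reduce to Theorem~\ref{nazari12} via a classical Brauer-type rank-one update. Recall Brauer's identity: if $M$ has an eigenpair $(\mu, \mathbf{w})$, then for any vector $\mathbf{q}$ the matrix $M + \mathbf{w}\mathbf{q}^\top$ has spectrum obtained from that of $M$ by replacing $\mu$ with $\mu + \mathbf{q}^\top \mathbf{w}$ (the other eigenvalues of $M$ are unchanged). By a permutation similarity, which preserves both nonnegativity and spectrum, I may assume that $c$ occupies position $(n,n)$ of $A$, so that
\begin{equation*}
A = \begin{bmatrix} A_1 & \mathbf{a} \\ \mathbf{b}^\top & c \end{bmatrix}
\end{equation*}
already has the block form demanded by Theorem~\ref{nazari12}. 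Let $\mathbf{v}$ denote the unit nonnegative Perron eigenvector of $B$ for $\beta_1$, and let $\mathbf{u}$ denote a nonnegative Perron eigenvector of $A$ for $\lambda_1$. The argument then splits according to the sign of $\beta_1 - c$.

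If $\beta_1 \leq c$, I set $B' := B + (c - \beta_1)\mathbf{v}\mathbf{v}^\top$. Since $c - \beta_1 \geq 0$ and $\mathbf{v} \geq 0$, $B'$ is nonnegative. Brauer's identity applied to $(\beta_1, \mathbf{v})$ shows that the spectrum of $B'$ is $\{c, \beta_2, \ldots, \beta_m\}$, with $\mathbf{v}$ still an eigenvector but now attached to the Perron value $c$ (since $c \geq \beta_1 \geq |\beta_k|$). Theorem~\ref{nazari12} applied to $(A, B')$ then delivers a nonnegative $(n+m-1)\times(n+m-1)$ matrix whose spectrum is $\{\lambda_1, \ldots, \lambda_n\} \cup \{\beta_2, \ldots, \beta_m\}$, which coincides with the target list because $\max\{\beta_1 - c, 0\} = 0$ in this regime.

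If $\beta_1 > c$, I instead modify $A$: assuming $u_n > 0$, set
\begin{equation*}
A^* := A + \frac{\beta_1 - c}{u_n}\,\mathbf{u}\mathbf{e}_n^\top.
\end{equation*}
Only the $n$-th column of $A$ is altered, the new $(n,n)$ entry becomes $c + (\beta_1 - c) = \beta_1$, and every new entry is nonnegative, so $A^* \geq 0$. Brauer's identity applied to $(\lambda_1, \mathbf{u})$ yields spectrum $\{\lambda_1 + \beta_1 - c, \lambda_2, \ldots, \lambda_n\}$ for $A^*$, with Perron value $\lambda_1 + \beta_1 - c$. Theorem~\ref{nazari12} applied to $(A^*, B)$ then produces a nonnegative $(n+m-1)\times(n+m-1)$ matrix whose spectrum is $\{\lambda_1 + \beta_1 - c, \lambda_2, \ldots, \lambda_n\} \cup \{\beta_2, \ldots, \beta_m\}$, exactly the claim.

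The main obstacle is the $u_n = 0$ scenario in the second case, which can arise only when $A$ is reducible and every nonnegative Perron eigenvector vanishes at the chosen coordinate. My first remedy would be to permute $A$ so that $c$ sits at a coordinate where $\mathbf{u}$ is strictly positive, whenever such a coordinate exists. If no such position is available, then $c$ lies inside a subdominant irreducible diagonal block of the Frobenius normal form of $A$, and the construction can be performed on that block and then reassembled by direct sum with the remaining dominant blocks; alternatively, one may perturb $A$ to $A + \epsilon J$ (with $J$ the all-ones matrix), carry out the construction for each $\epsilon > 0$, and pass to the limit $\epsilon \to 0^+$ in the finished block matrix using continuity of the characteristic polynomial and closedness of the nonnegative cone.
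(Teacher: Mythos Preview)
The paper does not supply its own proof of this statement; it simply quotes the result from \v{S}migoc~\cite{Smigoc2004}. Your reduction to Theorem~\ref{nazari12} via Brauer's rank-one shift is the natural route and is carried out correctly in both main cases. The difficulty is exactly the one you flag at the end, and none of your three remedies closes it.

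\emph{Permutation.} A permutation similarity moves the diagonal entry $c$ and the Perron eigenvector $\mathbf{u}$ together: if $c$ originally sits at position $i$ with $u_i=0$, then after any permutation the new last coordinate of the eigenvector is still that same zero. There is no way to ``move $c$ to a coordinate where $\mathbf{u}$ is positive'' unless some other diagonal entry happens to equal $c$.

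\emph{Frobenius normal form and direct sum.} This gives the wrong spectrum. Take $A=\mathrm{diag}(2,1)$, so $\lambda_1=2$, $\lambda_2=1$, and $c=1$ lies in the subdominant block $[1]$. For any $B$ with $\beta_1>1$, the target list is $(1+\beta_1,\,1,\,\beta_2,\ldots,\beta_m)$. Running your construction on the block $[1]$ and $B$ just returns $B$ (or a matrix with the same spectrum), and the direct sum with the dominant block $[2]$ has spectrum $(2,\beta_1,\beta_2,\ldots,\beta_m)$, not the target. The point is that the Perron eigenvalue you need to shift, $\lambda_1$, does not live in the block containing $c$.

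\emph{Limiting argument.} For $A_\epsilon=A+\epsilon J$ the Perron eigenvector satisfies $u_n(\epsilon)\to 0$, so the entries in the modified last column of $A_\epsilon^\ast$, namely $a_i+\epsilon+(\beta_1-c-\epsilon)\,u_i(\epsilon)/u_n(\epsilon)$, blow up whenever $u_i(0)>0$. Hence the finished matrices $C_\epsilon$ need not converge, and ``closedness of the nonnegative cone'' by itself says nothing. The argument can be rescued, but you must insert a diagonal similarity: perturb $B$ as well so that $C_\epsilon$ is irreducible, then rescale $C_\epsilon$ by its positive right Perron eigenvector to obtain a row-stochastic (up to the factor $\rho(C_\epsilon)$) matrix $C_\epsilon'$ whose entries are all bounded by $\rho(C_\epsilon)$. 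Now a subsequence of $C_\epsilon'$ converges to a nonnegative matrix with the desired spectrum. Without this balancing step the limit is not justified.
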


Also, the result of Theorem~\ref{nazari12} can be directly extended to the symmetric nonnegative matrices and has been discussed in~\cite[Theorem 8]{Laffey2007}.  
\begin{corollary}\label{nazari13}
Suppose $\{\lambda_k\}_{k=1}^n$ and $\{\beta_k\}_{k=1}^m$ are eigenvalues of a symmetric  $n\times n$ nonnegative matrix $A$ and a symmetric  $m\times m$ nonnegative matrix $B$, respectively, with $\lambda_1 \geq |\lambda_k|$ and $\beta_1 \geq |\beta_k|$ for all $k>1$. 
Let $\mathbf{v}$ be the unit eigenvector corresponding to the eigenvalue $\beta_1$.
If the matrix $A$ is of the form 
\begin{equation}
A = \left[\begin{array}{cc} A_1 & \mathbf{a} \\
\mathbf{a}^\top & \beta_1\end{array}\right], 
\end{equation}
where $A_1$ is an $(n-1)\times(n-1)$ matrix, and $\mathbf{a}, \mathbf{b}$ are two vectors in $\mathbb{R}^{n-1}$, 
then the set of the eigenvalues of the matrix 
\begin{equation}\label{eq:c2}
C = \left[\begin{array}{cc} A_1 & \mathbf{a} \mathbf{v}^\top \\
\mathbf{v} \mathbf{a}^\top & B\end{array}\right], 
\end{equation}
is  
$ \{\lambda_k\}_{k=1}^n\bigcup\{\beta_k\}_{k=2}^m$.


\end{corollary}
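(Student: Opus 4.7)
The plan is to produce an orthogonal matrix $Q$ that conjugates $C$ into a block-diagonal form whose blocks are $A$ and $\mathrm{diag}(\beta_2,\ldots,\beta_m)$; since orthogonal similarity preserves the spectrum, the eigenvalue claim will follow immediately.

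First observe that $C$ is automatically symmetric: $A_1$ and $B$ are symmetric by hypothesis, and $(\mathbf{a}\mathbf{v}^\top)^\top = \mathbf{v}\mathbf{a}^\top$, so the two off-diagonal blocks of $C$ are transposes of each other. Next, apply the spectral theorem to $B$ to write $B = U\Lambda U^\top$ with $U$ orthogonal and $\Lambda = \mathrm{diag}(\beta_1,\ldots,\beta_m)$, choosing the first column of $U$ to be the unit Perron eigenvector $\mathbf{v}$. Setting $Q = \mathrm{diag}(I_{n-1},U)$, the identities $U^\top \mathbf{v} = \mathbf{e}_1$ and $U^\top B U = \Lambda$ give
\[
Q^\top C Q = \begin{bmatrix} A_1 & \mathbf{a}\mathbf{e}_1^\top \\ \mathbf{e}_1\mathbf{a}^\top & \Lambda \end{bmatrix}.
\]

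The decisive observation is that $\Lambda$ is diagonal and the off-diagonal blocks are nonzero only in the positions coupling to the first index of the bottom block, so by inspection this matrix decouples into the direct sum of the $n\times n$ block
\[
\begin{bmatrix} A_1 & \mathbf{a} \\ \mathbf{a}^\top & \beta_1 \end{bmatrix} = A
\]
and the $(m-1)\times(m-1)$ diagonal block $\mathrm{diag}(\beta_2,\ldots,\beta_m)$. The spectrum of a direct sum is the union of the spectra of the summands, so $\sigma(C) = \{\lambda_k\}_{k=1}^n \cup \{\beta_k\}_{k=2}^m$, as desired.

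The main point requiring care is the identification of the top-left $n\times n$ block of $Q^\top C Q$ with the given $A$; this uses crucially that the $(n,n)$-entry of $A$ equals $\beta_1$, i.e., agrees with the Perron eigenvalue of $B$. Granting that alignment, the argument is a direct symmetric specialization of Theorem \ref{nazari12} (the case $\mathbf{b}=\mathbf{a}$), and the only genuinely new content here—the symmetry of $C$—is immediate from the transpose-duality of its off-diagonal blocks.
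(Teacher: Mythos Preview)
Your argument is correct. The orthogonal change of basis $Q=\mathrm{diag}(I_{n-1},U)$ with $U$ an orthogonal eigenbasis of $B$ having $\mathbf v$ as its first column indeed reduces $C$ to the block-diagonal matrix $A\oplus\mathrm{diag}(\beta_2,\ldots,\beta_m)$, and the spectrum claim follows.

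As for comparison with the paper: the paper does not actually supply a proof of this corollary. It simply remarks that the result is a direct extension of Theorem~\ref{nazari12} to the symmetric case and cites \cite[Theorem~8]{Laffey2007}. Your write-up is therefore not so much a different route as a complete, self-contained execution of the ``direct extension'' the paper invokes. The only feature worth flagging is that in the symmetric setting the spectral theorem lets you take $U$ orthogonal, so the similarity is orthogonal and the symmetry of $C$ is manifestly preserved; in the general (nonsymmetric) Theorem~\ref{nazari12} one would instead use a Jordan or Schur-type basis for $B$ and a nonorthogonal similarity. That is the one place where your proof genuinely exploits the symmetric hypothesis beyond the trivial observation that $C=C^\top$.
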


Based on Theorem~\ref{nazari12} or Corollary~\ref{nazari13}, we outline our ideas for the computation of a nonnegative matrix or symmetric nonnegative matrix, respectively, followed by a recursive algorithm. Our strategy is quite straightforward, but it offers an effective way to solve an RNIEP or SNIEP.
Here, we take the construction of a symmetric nonnegative matrix as an example. A similar approach can be applied to solve RNIEP with nonsymmetric cases and is demonstrated in section 4.
 Assume first that the set of 
eigenvalues $\{\lambda_1,\ldots,\lambda_n\}$ are
arranged in the order $\lambda_1\geq\ldots\geq\lambda_r\geq 0\geq\lambda_{r+1}\geq\ldots\geq \lambda_n$ and satisfy the condition 
\begin{equation}\label{condniep}
\lambda_1 +\lambda_{r+1}+\ldots+\lambda_n \geq 0.
\end{equation}
Note that condition~\eqref{condniep} is weaker than  
Suleimanova's result given in Theorem~\ref{Sulei}. To prove the existence of a nonnegative matrix of general dimensionality with eigenvalues $\{\lambda_i\}_{i=1}^n$, one can treat the iteration in terms of $2\times 2$  matrices step by step. For example, start with
a $2\times 2$ matrix $A$ as follows.

\emph{Case} 1. Suppose $\lambda_{2} \geq 0$ and  choose without loss of generality a $2\times 2$ matrix 
\begin{equation}
A = \left[\begin{array}{cc}\lambda_2 & 0 \\0 & \lambda_1\end{array}\right]. 
\end{equation}

\emph{Case} 2. Suppose $\lambda_{2} < 0$ and choose without loss of generality a $2\times 2$ matrix
\begin{equation}
A = \left[\begin{array}{cc} 0 & 
\sqrt{-\lambda_2\lambda_1} \\
\sqrt{-\lambda_2\lambda_1} & 
\lambda_1 + \lambda_2 \end{array}\right]. 
\end{equation}

Now let another eigenvalue $\lambda_3$ creep into the matrix $A$, i.e., obtain the matrix $C_1$ in
~\eqref{eq:c2}, 
by suitably augmenting a new $2\times 2$ matrix $B_1$ with eigenvalues $\{\lambda_3, A(2,2)\}$ from Lemma
 ~\ref{LemNon2}, 
 Again, two cases are required to be considered.

\emph{Case} 1. Suppose $\lambda_{3} \geq 0$ and  choose without loss of generality a $2\times 2$ matrix 
\begin{equation}
B_1 = \left[\begin{array}{cc}\lambda_3 & 0 \\0 & A(2,2)\end{array}\right]. 
\end{equation}
Here, $A(i,j)$ represents the $(i,j)$ entry of $A$.

\emph{Case} 2. Suppose $\lambda_{3} < 0$ and choose without loss of generality a $2\times 2$ matrix
\begin{equation}
B_1 = \left[\begin{array}{cc} 0 & 
\sqrt{-\lambda_3 A(2,2)} \\
\sqrt{-\lambda_3 A(2,2)} & 
\lambda_3 + A(2,2) 
\end{array}\right]. 
\end{equation}
Note that this augmentation is possible because by construction $A(2,2)\geq |\lambda_3|$.
It follows from 
Corollary~\ref{nazari13} that the matrix 
\begin{equation}\label{eq:c3}
C_1 = \left[\begin{array}{cc} A(1,1) &  A(1,2)\mathbf{v_1}^\top \\
\mathbf{v_1}A(1,2) & B_1\end{array}\right] 
\end{equation}
has a set of eigenvalues $\{\lambda_1, \lambda_2,
\lambda_3\}$, where $\mathbf{v_1}$ is the unit eigenvector corresponding to the Perron eigenvalue of the matrix $B_1$. Upon obtaining the matrix $C_1$ in~\eqref{eq:c3}, we shall replace entries of the original matrix $A$ with those of the new matrix $C_1$, i.e., redefine $A = C_1$, 
 and continue to augment  another $2\times 2$ matrix $B_2$ with eigenvalues $\{\lambda_4,A(3,3)\}$. 
 From 
Corollary~\ref{nazari13},
 the entry $A(3,3)$ in the lower right corner of the new matrix $A$ is required to be the Perron eigenvalue of the subsequent $2\times 2$ matrix $B_2$.  Since 
 by condition~\eqref{condniep}, 
\begin{equation*}
A(3,3)+\lambda_4\geq 0, \quad \mbox{if } \lambda_4  < 0, 
\end{equation*}
that is,  $A(3,3)\geq |\lambda_4|$, and  
$A(3,3)=\lambda_1\geq \lambda_4$, if $\lambda_4  \geq 0 $, it follows from 
Lemma~\ref{LemNon2}
that
there is a nonnegative matrix $B$ with eigenvalues 
$\{\lambda_4, A(3,3)\}$. Therefore another new matrix $C_2$ can be defined by
\begin{equation*}
C_2 = \left[\begin{array}{cc} A(1:2,1:2) &  A(1:2,3)\mathbf{v}_2^\top \\
\mathbf{v_2} A(1:2,3)^\top& B_2\end{array}\right], 
\end{equation*}
where $\mathbf{v_2}$ is the unit eigenvector corresponding to the Perron eigenvalue $A(3,3)$ of the matrix $B_2$ and $A(i:j,k)$ represents a column vector 
defined by $A(i:j,k) =[A(i,k),\ldots,A(j:k)]^\top$.
We then redefine the matrix $A$ by $A = C_2$ with eigenvalues $\{\lambda_1,\lambda_2,\lambda_3,\lambda_4\}$.
Again, by condition~\eqref{condniep}, the entry $A(4,4)$ in the lower right corner of $A$ can be served as the Perron eigenvalue of a nonnegative matrix $B_3$ with eigenvalues $\{\lambda_5,A(4,4)\}$.
We then continue the above process for the next category, and finally obtain 
a constructive way for the solution of an $n\times n$ nonnegative matrix $A$ with eigenvalues $\{\lambda_1,\ldots,\lambda_n\}$. 

The above recursive process 
for obtaining a nonnegative matrix with the desired spectrum $\{\lambda_1,\ldots,\lambda_n\}$ can be conveniently demonstrated
in MATLAB expressions as in Algorithm~\ref{alto}. 
More specifically, the matrix obtained by Algorithm~\ref{alto} is explicitly a symmetric matrix, but also implies the capacity of solving RNIEP.
%
In other words, it is quite intriguing that different approaches using different sets of $2\times 2$ matrices end up with different kinds of nonnegative matrices with prescribed eigenvalues. We apply the following example to demonstrate this property
more fully.

\begin{example}
Given eigenvalues $\{2,\frac{1}{2},-1\}$, this example follows from the approach given in Algorithm~\ref{alto}. We might select the initial matrix $A$ as 
\begin{equation*}
A = \left[\begin{array}{cc}\frac{1}{2} & 0 \\0 & 2\end{array}\right]. 
\end{equation*}
and two kinds of matrix $B$ as 
\begin{equation*}
B = \left[\begin{array}{cc}0 & \sqrt{2} \\\sqrt{2}  & 1\end{array}\right] \mbox{ or }
 B = \left[\begin{array}{cc}0 & 2 \\1 & 1\end{array}\right]. 
\end{equation*}
This implies that the matrix $C$ obtained from the combination of matrices $A$ and $B$ is written as 
\begin{equation*}
C = \left[\begin{array}{ccc}\frac{1}{2}& 0& 0 \\
0& 0& \sqrt{2} \\
0& \sqrt{2}& 1 \end{array}\right] \mbox{ or }
C = \left[\begin{array}{ccc}\frac{1}{2}& 0& 0 \\
0& 0& 2 \\
0& 1& 1 \end{array}\right] 
\end{equation*}
 with eigenvalues $\{2,\frac{1}{2},-1\}$. One is symmetric nonnegative matrix and the other is just nonnegative matrix.
\end{example}
\begin{algorithm}
\caption{The RNIEP/SNIEP: \hfill $[A] = \textsc{RNIEP/SNIEP}(\Lambda)$} \label{alto}
\begin{algorithmic}
\STATE{ {\bf Given} $\Lambda = \rm{diag}(\lambda_1,\ldots,\lambda_n)$, where $\lambda_1\geq \lambda_2\geq\ldots\geq\lambda_n$,  
}

%
\RETURN{a $n\times n$ symmetric nonnegative matrix $A$ 
that is isospectral to $\Lambda$.
 }
\STATE  \% Set up an initial $2\times 2$ matrix
 \IF {$\lambda_2 \geq 0$}  \STATE 
\STATE 
$A \leftarrow  \left[\begin{array}{cc}\lambda_2 & 0 \\0 & \lambda_1
\end{array}\right]$;
\ELSE
\STATE $A \leftarrow  \left[\begin{array}{cc}0 & \sqrt{-\lambda_2 \lambda_1} \\
\sqrt{-\lambda_2 \lambda_1} & \lambda_1+\lambda_2
\end{array}\right]$;
\ENDIF

\STATE \% Conquering procedure
\FOR{$i = 3 \ldots n$}
      \IF {$\lambda_i \geq 0$}
 \STATE  $B \leftarrow \left[\begin{array}{cc}\lambda_i & 0 \\0 & A(i-1,i-1)\end{array}\right]$;    
       \ELSE
 \STATE  $B \leftarrow \left[\begin{array}{cc}0 & 
                  \sqrt{-\lambda_i A(i-1,i-1)} \\
             \sqrt{-\lambda_i A(i-1,i-1)} & A(i-1,i-1)+           
             \lambda_i\end{array}\right]$;               
        \ENDIF
   \STATE \% Compute the Perron eigenvector of $B$.     
  \STATE{$[v] = \textsc{PerronEigvector}(B)$}
   \STATE{\% Apply Theorem~\ref{nazari12}/Corollary~\ref{nazari13}}      
        
\STATE { 
$\textsc{temp} \leftarrow  A(1:i-2,i-1)*v^\top$;}
\STATE { 
 $A \leftarrow  \left[\begin{array}{cc}A(1:i-2,1:i-2) & \textsc{temp} \\
\textsc{temp}^\top & B
\end{array}\right]$,
}

\ENDFOR
\end{algorithmic}
\end{algorithm}

%
%
%
%
%
%
%
%
%
%
%
%
%
%
%
%

Note that in our algorithm, we 
break down the construction of the desired matrix $A$ to a sequence of submatrices of size $2$
and then combine these submatrices together to give a nonnegative solution to the original problem. 
Based on this conquering procedure, we then have the following sufficient condition for the construction of a nonnegative matrix and its proof can be directly observed from the above discussion.

%
%
\begin{theorem}\label{SuleiMin}
Let $\lambda_1 \geq \lambda_2\geq \ldots \geq \lambda_n$ be real numbers and let $r$ be the greatest number with $\lambda_r \geq 0$. If 
the condition
\begin{equation}\label{Min}
\lambda_1 +\lambda_{r+1}+\ldots+\lambda_n \geq 0
\end{equation}
is satisfied, then there exists an $n\times n$ nonnegative matrix with $\sigma = \{\lambda_1,\ldots,\lambda_n\}$ as its spectrum. Indeed, the nonnegative matrix can also be chosen to be a symmetric matrix.
\end{theorem}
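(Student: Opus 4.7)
My plan is to prove the theorem by induction on $n$, using Lemma~\ref{LemNon2} as the $2\times 2$ building block and Corollary~\ref{nazari13} as the glue between successive sizes. This will directly yield a symmetric nonnegative matrix, and since every symmetric nonnegative matrix is in particular nonnegative, the final assertion of the theorem comes for free. The proof essentially unwinds the recursion already described informally for Algorithm~\ref{alto}, making the inductive invariant explicit.

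Concretely, I would construct a chain $A^{(2)}, A^{(3)}, \ldots, A^{(n)}$ of symmetric nonnegative matrices with $A^{(k)}$ of size $k\times k$ and spectrum $\{\lambda_1,\ldots,\lambda_k\}$. The base matrix $A^{(2)}$ is supplied by~\eqref{Sym} with the pair $(\lambda_1,\lambda_2)$. For the inductive step, I would take the current bottom-right entry $A^{(k)}(k,k)$ and apply Lemma~\ref{LemNon2} to the pair $(\lambda_{k+1}, A^{(k)}(k,k))$, producing a $2\times 2$ symmetric nonnegative matrix $B_k$ whose Perron root is exactly $A^{(k)}(k,k)$; then Corollary~\ref{nazari13}, applied with $\mathbf{v}_k$ the unit Perron eigenvector of $B_k$, produces the $(k{+}1)\times(k{+}1)$ symmetric nonnegative matrix $A^{(k+1)}$ whose spectrum picks up $\lambda_{k+1}$ as required.

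The main obstacle, and the only place condition~\eqref{Min} is used, is verifying the inductive invariant
\begin{equation*}
A^{(k)}(k,k) \;\geq\; |\lambda_{k+1}|, \qquad k=2,\ldots,n-1,
\end{equation*}
which is precisely what Lemma~\ref{LemNon2} demands in order to realize the pair $(\lambda_{k+1}, A^{(k)}(k,k))$ and what Corollary~\ref{nazari13} needs so that $A^{(k)}(k,k)$ is genuinely the Perron root of $B_k$. By tracing the two cases of the construction~\eqref{Sym} through the recursion (positive $\lambda_j$ leaves the corner unchanged; negative $\lambda_j$ decreases it by $|\lambda_j|$), an easy induction on $k$ delivers the closed form
\begin{equation*}
A^{(k)}(k,k) \;=\; \lambda_1 \;+\; \sum_{j=r+1}^{k}\lambda_j,
\end{equation*}
with the convention that the sum is empty (hence zero) when $k\le r$. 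The required bound $A^{(k)}(k,k)\ge |\lambda_{k+1}|$ is automatic when $\lambda_{k+1}\ge 0$ (since $\lambda_1\ge\lambda_{k+1}$), and in the negative case rearranges to $\lambda_1+\lambda_{r+1}+\cdots+\lambda_{k+1}\ge 0$, which follows from~\eqref{Min} by discarding the nonpositive tail $\lambda_{k+2}+\cdots+\lambda_n$.

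Once the invariant is in place, the recursion runs for $k=2,\ldots,n-1$ without obstruction and $A:=A^{(n)}$ is the desired symmetric nonnegative matrix with spectrum $\sigma=\{\lambda_1,\ldots,\lambda_n\}$, proving both the symmetric and the nonsymmetric claims simultaneously.
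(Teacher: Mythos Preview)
Your proposal is correct and follows essentially the same route as the paper: the paper states that the proof ``can be directly observed from the above discussion,'' that discussion being precisely the recursive build-up $A^{(2)},A^{(3)},\ldots,A^{(n)}$ via Lemma~\ref{LemNon2} and Corollary~\ref{nazari13} with the running check that the lower-right corner dominates the next eigenvalue. Your write-up makes the inductive invariant and the closed form $A^{(k)}(k,k)=\lambda_1+\sum_{j=r+1}^{k}\lambda_j$ explicit, which is exactly the formalization the paper's informal description invites.
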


Note that the sufficient condition in Theorem~\ref{SuleiMin} does not require the negativity of the remaining $n-1$ eigenvalues and is somewhat weaker than that given by Suleimanova.  This simplified condition is also shown in~\cite{Perfect1952} for the cases $n=2$ and $3$ by a geometric point of view. 

\section{Numerical experiments}
In this section, we demonstrate 
by numerical examples how Algorithm~\ref{alto} can be applied to construct the solution of RNIEP
 (or, SNIEP) and stochastic matrices associated with some particular spectrum. 
 

\begin{example}
To illustrate the feasibility of our approach again problems of relatively large size, we being with a set of eigenvalues of size larger than $5$. To demonstrate the robustness of our approach, the test data is generated from a uniform distribution over the interval $[-10,0]$, say $\{21.3323,    5.0851,   3.0635,\\   -5.1077,   -7.9483,   -8.1763\}$. It can be easily seen that this set of eigenvalues satisfies condition~\eqref{Min}. Reported below is one typical result in our experiment.
\begin{equation*}
A = \left[\begin{array}{cccccc}
    5.0851 &        0&         0&         0&         0&         0\\
         0   & 3.0635     &    0        & 0       &  0  &       0\\
         0    &     0       &  0 &   5.9856    & 6.0286 &   6.0653\\
         0     &    0    &5.9856&         0   & 8.0054   & 8.0542\\
         0      &   0   & 6.0286 &   8.0054  &       0  &  8.2261\\
         0       &  0&    6.0653  &  8.0542&    8.2261&    0.1000
\end{array}\right]
\end{equation*}
We note that the original algorithm considers a symmetric nonnegative matrix as the target. As is expected, the output result can be a general nonnegative matrix with the same spectrum. This result can be obtained by choosing the updated matrices as\begin{equation*}
B = \left[\begin{array}{cc}0 &  -A(i-1,i-1)\lambda_i \\1 &  A(i-1,i-1)+\lambda_i\end{array}\right], 
\quad \mbox{if } \lambda_i< 0,
\end{equation*}
and the reported result is 
%
%
\begin{equation*}
A = \left[\begin{array}{cccccc}
    5.0851&         0   &      0    &     0     &    0\\
         0 &   3.0635    &     0       &  0     &    0\\
         0   &      0   &      0  &108.1067  & 13.6012\\
         0  &       0 &   0.9922 &        0 & 128.9580\\
         0   &      0   & 0.1248  &  1.0000 &   8.2763
\end{array}\right].
\end{equation*}

\end{example}

 \begin{example}
In this example, the well-known Suleimanova's result 
in Theorem~\ref{Sulei} is given to test our approach. To begin with, we randomly generate a set of negative eigenvalues, for example
$\{5.4701, 2.9632, 7.4469, 1.8896\}$ 
from the uniform distribution on the interval $[-10,0]$. 
We might select without loss of generality a positive eigenvalue $17.8698$ so that the Suleimanova's condition is satisfied. Using this spectrum, a desired nonnegative matrix can then be computed by applying Algorithm 1 as follows: 
\begin{equation*}
A =\left[\begin{array}{ccccc}
         0  &  2.2982 &   2.9032 &   3.1562 &   3.1773\\
      2.2982 &    0&    3.7431&    4.0693&    4.0966\\
    2.9032 &   3.7431&         0&    5.9468&    5.9866\\
    3.1562&    4.0693&    5.9468&         0&    7.4967\\
    3.1773&    4.0966&    5.9866&    7.4967&    0.1000
    \end{array}\right]
\end{equation*}

 \end{example}

 \begin{example}
 In this example, we illustrate the application of our approach to construct a stochastic matrix with a prescribed spectrum. This is the so-called inverse stochastic eigenvalue problem. Note that the inverse eigenvalue problem for nonnegative matrices is practically equivalent to that for stochastic matrices.
For example, if  $\{\lambda_1,\ldots,\lambda_n\}$ 
with $\lambda_1 = \max_{1\leq i\leq n}|\lambda_i|$
is the set of eigenvalues of an $n\times n$ nonnegative matrix, then it is known that $\{1,\lambda_2/\lambda_1,\ldots,\lambda_n/\lambda_1\}$ is the spectrum of a $n\times n$
row stochastic matrix~\cite{Wuwen1996}[Lemma 5.3.2]. Our approach is first to construct nonnegative matrix with the given spectrum and then transform the nonnegative matrix to a stochastic matrix based on the following theorem~\cite{Minc1988}.
\begin{theorem}\label{minc}
Suppose $A$ is a nonnegative matrix with a positive maximal eigenvalue $\rho(A)$ and a positive eigenvector $\mathbf{x}=[x_{i}]$ such that $A\mathbf{x} = \rho(A)\mathbf{x}$. Let $D = [d_{ij}]$ be a diagonal matrix with diagonal entries defined by $d_{ii} = {x}_i$. Then $\dfrac{1}{\rho(A)}D^{-1}AD$ is a stochastic matrix.
\end{theorem}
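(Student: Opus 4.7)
The plan is to verify the two defining properties of a (row-)stochastic matrix for $M := \frac{1}{\rho(A)} D^{-1} A D$: nonnegativity of every entry and the row-sum condition $M\mathbf{e} = \mathbf{e}$, where $\mathbf{e} = [1,\ldots,1]^\top$.

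First I would dispose of nonnegativity. Since $\mathbf{x}$ is positive by hypothesis, each $d_{ii} = x_i > 0$, so $D$ and $D^{-1}$ are diagonal matrices with strictly positive entries. Conjugation of the nonnegative matrix $A$ by such a diagonal scaling gives $(D^{-1} A D)_{ij} = x_i^{-1} A_{ij} x_j \geq 0$, and dividing by the positive scalar $\rho(A) > 0$ preserves nonnegativity. So $M \geq 0$ entrywise.

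Next I would check the row sums via a one-line computation exploiting the eigenvector identity. The key observation is that $D \mathbf{e} = \mathbf{x}$, because the diagonal entries of $D$ are exactly the coordinates of $\mathbf{x}$. Therefore
\begin{equation*}
M \mathbf{e} \;=\; \frac{1}{\rho(A)}\, D^{-1} A D \mathbf{e} \;=\; \frac{1}{\rho(A)}\, D^{-1} A \mathbf{x} \;=\; \frac{1}{\rho(A)}\, D^{-1} \bigl(\rho(A)\,\mathbf{x}\bigr) \;=\; D^{-1} \mathbf{x} \;=\; \mathbf{e},
\end{equation*}
which is exactly the statement that every row of $M$ sums to $1$.

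There is no serious obstacle here; the proof is a direct unwinding of definitions, and the only content is recognizing that $\mathbf{x}$ plays the dual role of (i) supplying the diagonal scaling $D$ so that $D\mathbf{e} = \mathbf{x}$, and (ii) being the eigenvector that collapses $A\mathbf{x}$ to $\rho(A)\mathbf{x}$. The hypothesis that $\mathbf{x}$ is \emph{strictly} positive (rather than merely nonnegative, as the Perron--Frobenius theorem would guarantee) is what justifies forming $D^{-1}$, and the hypothesis $\rho(A) > 0$ is what justifies the scalar division; both need to be mentioned explicitly, but neither requires further argument.
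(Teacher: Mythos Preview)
Your proof is correct and is the standard argument. The paper does not actually supply its own proof of this statement---it is quoted verbatim from Minc's book \cite{Minc1988} and used as a tool---so there is nothing to compare against; your verification of entrywise nonnegativity via the positive diagonal scaling and of the row-sum condition via $D\mathbf{e}=\mathbf{x}$ together with $A\mathbf{x}=\rho(A)\mathbf{x}$ is exactly what one would expect.
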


The example experimented here is taken from~\cite{Chu1998}. It is to find a stochastic matrix with 
(presumedly randomly generated)
eigenvalues $\{1.0000, -0.2608,\\0.5046,0.6438,-0.4483\}$. To facilitate our illustration, assume the eigenvalues 
have been arranged in the decreasing order such that 
$\lambda_1 = 1.0000,\lambda_2=0.6438,
\lambda_3 = 0.5046, \lambda_4 = -0.2608$ and $\lambda_5 = -0.4483
$. 
Note that in order to apply Theorem~\ref{minc}, the constructed nonnegative matrix should have a positive eigenvector corresponding to a positive maximal eigenvalue. For this purpose, we have to fine-tune Algorithm~\ref{alto}, while including positive eigenvalues into a matrix. 
This adjustment is a simply application of Lemma~\ref{LemNon} by 
computing $\textsc{Neg} = \lambda_4 + \lambda_5$, choosing the initial value $A$
as
\begin{equation*}
A = \left[\begin{array}{cc}
a
 & b \\c & 
d
\end{array}\right],
\end{equation*}
where 
$a = \lambda_2+\frac{\lambda_1+\textsc{Neg}}{2}$,
$d =  \frac{\lambda_1-\textsc{Neg}}{2}$
and $b=c = \sqrt{ad-\lambda_1\lambda_2}$, and selecting the subsequent matrix $B$ as
\begin{equation*}
B = \left[\begin{array}{cc}
e
 & f \\g & 
h
\end{array}\right]
\end{equation*}
where 
$e = \lambda_3+\frac{A(2,2)+\textsc{Neg}}{4}$,
$h =  \frac{3A(2,2)-\textsc{Neg}}{4}$
and $g=f = \sqrt{eh-\lambda_3A(2,2)}$.  
It is true that by Lemma~\ref{LemNon}, we have  many different choices for selecting matrices $A$ and $B$. Our methodology used here is to construct an irreducible nonnegative matrix in the end. It then follows from the Perron-Frobenius theorem~\cite{Gantmacher1959} that for this nonnegative matrix, there is a positive eigenvalue associated with an eigenvector which can be chosen to be entry-wise positive.  It follows that an example of a stochastic matrix with the desired spectrum is 
\begin{equation}
A = \left[\begin{array}{ccccc}
    0.7893  &  0.0219&    0.0456  &  0.0638 &   0.0794\\
    0.1454  & 0.5410 &   0.0758   & 0.1060  &  0.1318\\
    0.1454  &0.0364   &      0 &   0.3647 &   0.4535\\
    0.1454  &0.0364   & 0.2608 &        0   & 0.5574\\
    0.1454  &0.0364    & 0.2608   & 0.4483 &   0.1091
\end{array}\right]
\end{equation}

 In~\cite{Chu1998}, this example is further restricted to a structured stochastic matrix with the zero pattern 
 given by the zeros of the following matrix:
 \begin{equation*}
 Z = \left[\begin{array}{ccccc}1 & 1 & 0 & 0 & 1 \\1 & 1 & 1 & 0 & 0 \\0 &1 & 1 & 1 & 0 \\0 & 0 & 1 & 1 & 1 \\1 & 0 & 0 & 1 & 1\end{array}\right]
 \end{equation*}
 Our algorithm as it stands can not solve this problem directly though we might be able to select a sequence of particular matrices so that the obtained nonnegative matrix has structure corresponding or similar to the matrix $Z$. However, unlike the methods proposed in~\cite{Chu1998,Orsi2006}, our methodology is computed by simply combing a sequence of $2\times 2$ matrices, that is, the computed result can preserve the desired spectrum with high precision.

 \end{example}
 
%
%
%

\section{Conclusion}
Determining the necessary and sufficient conditions of solving inverse eigenvalue problems for nonnegative matrices or symmetric nonnegative matrices is very challenging and the conditions for matrices of larger size remain unknown. The main thrust of this paper is to present a numerical procedure for constructing a nonnegative matrix or a symmetric nonnegative matrix provided that the desired spectrum is given. With slight modification, our method can solve inverse eigenvalue problems for stochastic matrices as well.  
The crux of our algorithm is the employment of 
Nazari and Sherafat's result~\cite{Nazari2012}. At each step, we look for a sequence of $2\times 2$ matrices with the desired eigenvalues and a desired structure such as symmetry and combine them together for solving the RNIEP or SRIEP. We then propose, based on our procedure, a weaker sufficient condition for solving RNIEP than Suleimanova's result and a condition for solving SNIEP. 

From the existing structured inverse eigenvalue problems, this paper describes only a numerical procedure for symmetric nonnegative matrices
and stochastic matrices. However this procedure might serve as a possible computational tool for inverse eigenvalue problems involving many other types of structured nonnegative matrices such as Toeplitz, Hankel, and others. In addition, we also propose, based on our procedure, a weaker sufficient condition for solving RNIEP than Suleimanova's result and a condition for solving SNIEP. The application of our conquering strategy for structured inverse eigenvalue problems is a subject worthy of further investigation.

\section*{Acknowledgement}

The authors wish to thank Professor Moody Chu for valuable suggestions on the manuscript. This research work is partially supported by the National Science Council and the National Center for Theoretical Sciences in Taiwan.

\bibliographystyle{elsarticle-num}

\begin{thebibliography}{10}
\expandafter\ifx\csname url\endcsname\relax
  \def\url#1{\texttt{#1}}\fi
\expandafter\ifx\csname urlprefix\endcsname\relax\def\urlprefix{URL }\fi
\expandafter\ifx\csname href\endcsname\relax
  \def\href#1#2{#2} \def\path#1{#1}\fi

\bibitem{Loewy1978}
R.~Loewy, D.~London, A note on an inverse problem for nonnegative matrices,
  Linear and Multilinear Algebra 6~(1) (1978/79) 83--90.

\bibitem{Chu02}
M.~T. Chu, G.~H. Golub, Structured inverse eigenvalue problems, Acta Numer. 11
  (2002) 1--71.

\bibitem{Sulei1949}
K.~R. Suleimanova, Stochastic matrices with real eigenvalues, Soviet
  Mathematics Doklady 66 (1949) 343--345.

\bibitem{Fiedler1974}
M.~Fiedler,
  \href{http://www.sciencedirect.com/science/article/pii/0024379574900317}{Eigenvalues
  of nonnegative symmetric matrices}, Linear Algebra and its Applications 9~(0)
  (1974) 119 -- 142.
\newblock \href {http://dx.doi.org/10.1016/0024-3795(74)90031-7}
  {\path{doi:10.1016/0024-3795(74)90031-7}}.
\newline\urlprefix\url{http://www.sciencedirect.com/science/article/pii/0024379574900317}

\bibitem{Perfect55}
H.~Perfect, Methods of constructing certain stochastic matrices. {II}, Duke
  Math. J. 22 (1955) 305--311.

\bibitem{Kellogg1971}
R.~B. Kellogg,
  \href{http://www.sciencedirect.com/science/article/pii/0024379571900152}{Matrices
  similar to a positive or essentially positive matrix}, Linear Algebra and its
  Applications 4~(3) (1971) 191 -- 204.
\newblock \href {http://dx.doi.org/10.1016/0024-3795(71)90015-2}
  {\path{doi:10.1016/0024-3795(71)90015-2}}.
\newline\urlprefix\url{http://www.sciencedirect.com/science/article/pii/0024379571900152}

\bibitem{Salzmann1972}
F.~L. Salzmann, A note on eigenvalues of nonnegative matrices, Linear Algebra
  and Appl. 5 (1972) 329--338.

\bibitem{Berman1994}
A.~Berman, R.~J. Plemmons,
  \href{http://dx.doi.org.prox.lib.ncsu.edu/10.1137/1.9781611971262}{Nonnegative
  matrices in the mathematical sciences}, Vol.~9 of Classics in Applied
  Mathematics, Society for Industrial and Applied Mathematics (SIAM),
  Philadelphia, PA, 1994, revised reprint of the 1979 original.
\newblock \href {http://dx.doi.org/10.1137/1.9781611971262}
  {\path{doi:10.1137/1.9781611971262}}.
\newline\urlprefix\url{http://dx.doi.org.prox.lib.ncsu.edu/10.1137/1.9781611971262}

\bibitem{Borobia1995}
A.~Borobia, \href{http://dx.doi.org/10.1016/0024-3795(94)00343-C}{On the
  nonnegative eigenvalue problem}, Linear Algebra Appl. 223/224 (1995)
  131--140, special issue honoring Miroslav Fiedler and Vlastimil Pt{\'a}k.
\newblock \href {http://dx.doi.org/10.1016/0024-3795(94)00343-C}
  {\path{doi:10.1016/0024-3795(94)00343-C}}.
\newline\urlprefix\url{http://dx.doi.org/10.1016/0024-3795(94)00343-C}

\bibitem{Soto2003b}
R.~L. Soto, \href{http://dx.doi.org/10.1016/S0024-3795(02)00731-0}{Existence
  and construction of nonnegative matrices with prescribed spectrum}, Linear
  Algebra Appl. 369 (2003) 169--184.
\newblock \href {http://dx.doi.org/10.1016/S0024-3795(02)00731-0}
  {\path{doi:10.1016/S0024-3795(02)00731-0}}.
\newline\urlprefix\url{http://dx.doi.org/10.1016/S0024-3795(02)00731-0}

\bibitem{Egleston2004}
P.~D. Egleston, T.~D. Lenker, S.~K. Narayan,
  \href{http://dx.doi.org.prox.lib.ncsu.edu/10.1016/j.laa.2003.10.019}{The
  nonnegative inverse eigenvalue problem}, Linear Algebra Appl. 379 (2004)
  475--490, tenth Conference of the International Linear Algebra Society.
\newblock \href {http://dx.doi.org/10.1016/j.laa.2003.10.019}
  {\path{doi:10.1016/j.laa.2003.10.019}}.
\newline\urlprefix\url{http://dx.doi.org.prox.lib.ncsu.edu/10.1016/j.laa.2003.10.019}

\bibitem{Soto2007a}
C.~Mariju{\'a}n, M.~Pisonero, R.~L. Soto,
  \href{http://dx.doi.org/10.1016/j.laa.2007.05.046}{A map of sufficient
  conditions for the real nonnegative inverse eigenvalue problem}, Linear
  Algebra Appl. 426~(2-3) (2007) 690--705.
\newblock \href {http://dx.doi.org/10.1016/j.laa.2007.05.046}
  {\path{doi:10.1016/j.laa.2007.05.046}}.
\newline\urlprefix\url{http://dx.doi.org/10.1016/j.laa.2007.05.046}

\bibitem{Soto2011}
R.~L. Soto, A.~I. Julio, A note on the symmetric nonnegative inverse eigenvalue
  problem, Int. Math. Forum 6~(49-52) (2011) 2447--2460.

\bibitem{Meehan1998}
M.~Meehan, Some results on matrix spectra, Ph.D. thesis, National University of
  Ireland, Dublin (1998).

\bibitem{Torre2007}
J.~Torre-Mayo, M.~R. Abril-Raymundo, E.~Alarcia-Est{\'e}vez, C.~Mariju{\'a}n,
  M.~Pisonero,
  \href{http://dx.doi.org.prox.lib.ncsu.edu/10.1016/j.laa.2007.06.014}{The
  nonnegative inverse eigenvalue problem from the coefficients of the
  characteristic polynomial. {EBL} digraphs}, Linear Algebra Appl. 426~(2-3)
  (2007) 729--773.
\newblock \href {http://dx.doi.org/10.1016/j.laa.2007.06.014}
  {\path{doi:10.1016/j.laa.2007.06.014}}.
\newline\urlprefix\url{http://dx.doi.org.prox.lib.ncsu.edu/10.1016/j.laa.2007.06.014}

\bibitem{Chu1998}
M.~T. Chu, Q.~Guo, \href{http://dx.doi.org/10.1137/S0895479896292418}{A
  numerical method for the inverse stochastic spectrum problem}, SIAM J. Matrix
  Anal. Appl. 19~(4) (1998) 1027--1039 (electronic).
\newblock \href {http://dx.doi.org/10.1137/S0895479896292418}
  {\path{doi:10.1137/S0895479896292418}}.
\newline\urlprefix\url{http://dx.doi.org/10.1137/S0895479896292418}

\bibitem{Orsi2006}
R.~Orsi, \href{http://dx.doi.org/10.1137/050634529}{Numerical methods for
  solving inverse eigenvalue problems for nonnegative matrices}, SIAM J. Matrix
  Anal. Appl. 28~(1) (2006) 190--212 (electronic).
\newblock \href {http://dx.doi.org/10.1137/050634529}
  {\path{doi:10.1137/050634529}}.
\newline\urlprefix\url{http://dx.doi.org/10.1137/050634529}

\bibitem{Chu1991}
M.~T. Chu, K.~R. Driessel,
  \href{http://dx.doi.org/10.1137/0522088}{Constructing symmetric nonnegative
  matrices with prescribed eigenvalues by differential equations}, SIAM J.
  Math. Anal. 22~(5) (1991) 1372--1387.
\newblock \href {http://dx.doi.org/10.1137/0522088}
  {\path{doi:10.1137/0522088}}.
\newline\urlprefix\url{http://dx.doi.org/10.1137/0522088}

\bibitem{Chu1998b}
M.~T. Chu, \href{http://dx.doi.org/10.1137/S0036144596303984}{Inverse
  eigenvalue problems}, SIAM Rev. 40~(1) (1998) 1--39.
\newblock \href {http://dx.doi.org/10.1137/S0036144596303984}
  {\path{doi:10.1137/S0036144596303984}}.
\newline\urlprefix\url{http://dx.doi.org/10.1137/S0036144596303984}

\bibitem{Chu2005}
M.~T. Chu, S.-f. Xu,
  \href{http://dx.doi.org.prox.lib.ncsu.edu/10.1002/nla.395}{On computing
  minimal realizable spectral radii of non-negative matrices}, Numer. Linear
  Algebra Appl. 12~(1) (2005) 77--86.
\newblock \href {http://dx.doi.org/10.1002/nla.395}
  {\path{doi:10.1002/nla.395}}.
\newline\urlprefix\url{http://dx.doi.org.prox.lib.ncsu.edu/10.1002/nla.395}

\bibitem{Chu05}
M.~T. Chu, G.~H. Golub, Inverse eigenvalue problems: theory, algorithms, and
  applications, Numerical Mathematics and Scientific Computation, Oxford
  University Press, New York, 2005.

\bibitem{Nazari2012}
A.~M. Nazari, F.~Sherafat,
  \href{http://dx.doi.org.prox.lib.ncsu.edu/10.1016/j.laa.2011.12.023}{On the
  inverse eigenvalue problem for nonnegative matrices of order two to five},
  Linear Algebra Appl. 436~(7) (2012) 1771--1790.
\newblock \href {http://dx.doi.org/10.1016/j.laa.2011.12.023}
  {\path{doi:10.1016/j.laa.2011.12.023}}.
\newline\urlprefix\url{http://dx.doi.org.prox.lib.ncsu.edu/10.1016/j.laa.2011.12.023}

\bibitem{Smigoc2004}
H.~{\v{S}}migoc, \href{http://dx.doi.org/10.1016/j.laa.2004.03.036}{The inverse
  eigenvalue problem for nonnegative matrices}, Linear Algebra Appl. 393 (2004)
  365--374.
\newblock \href {http://dx.doi.org/10.1016/j.laa.2004.03.036}
  {\path{doi:10.1016/j.laa.2004.03.036}}.
\newline\urlprefix\url{http://dx.doi.org/10.1016/j.laa.2004.03.036}

\bibitem{Laffey2007}
T.~J. Laffey, H.~{\v{S}}migoc,
  \href{http://dx.doi.org/10.1016/j.laa.2006.10.018}{Construction of
  nonnegative symmetric matrices with given spectrum}, Linear Algebra Appl.
  421~(1) (2007) 97--109.
\newblock \href {http://dx.doi.org/10.1016/j.laa.2006.10.018}
  {\path{doi:10.1016/j.laa.2006.10.018}}.
\newline\urlprefix\url{http://dx.doi.org/10.1016/j.laa.2006.10.018}

\bibitem{Perfect1952}
H.~Perfect,
  \href{http://journals.cambridge.org.prox.lib.ncsu.edu/article_S0305004100027626}{On
  positive stochastic matrices with real characteristic roots}, Mathematical
  Proceedings of the Cambridge Philosophical Society 48 (1952) 271--276.
\newblock \href {http://dx.doi.org/10.1017/S0305004100027626}
  {\path{doi:10.1017/S0305004100027626}}.
\newline\urlprefix\url{http://journals.cambridge.org.prox.lib.ncsu.edu/article_S0305004100027626}

\bibitem{Wuwen1996}
G.~Wuwen,
  \href{http://dx.doi.org.prox.lib.ncsu.edu/10.1016/0024-3795(95)00268-5}{An
  inverse eigenvalue problem for nonnegative matrices}, Linear Algebra Appl.
  249 (1996) 67--78.
\newblock \href {http://dx.doi.org/10.1016/0024-3795(95)00268-5}
  {\path{doi:10.1016/0024-3795(95)00268-5}}.
\newline\urlprefix\url{http://dx.doi.org.prox.lib.ncsu.edu/10.1016/0024-3795(95)00268-5}

\bibitem{Minc1988}
H.~Minc, Nonnegative matrices, Wiley-Interscience Series in Discrete
  Mathematics and Optimization, John Wiley \& Sons Inc., New York, 1988, a
  Wiley-Interscience Publication.

\bibitem{Gantmacher1959}
F.~R. Gantmacher, The theory of matrices. {V}ols. 1, 2, Translated by K. A.
  Hirsch, Chelsea Publishing Co., New York, 1959.

\end{thebibliography}

\end{document}